\documentclass[11pt]{article}
\usepackage{amsmath,amsthm,amsfonts,amssymb,amscd, amsxtra}
\usepackage{color}
\usepackage{lscape}
\oddsidemargin=0pt \evensidemargin=0pt \textwidth=6.5in

\headsep=1cm


\newtheorem{theorem}{Theorem}
\newtheorem{lemma}[theorem]{Lemma}

\newtheorem{corollary}[theorem]{Corollary}

\newtheorem{remark}{Remark}

\newtheorem{assumption}{Assumption}

\begin{document}
\title{On the global convergent of an inexact quasi-Newton conditional gradient method for constrained nonlinear systems}

\author{
    M.L.N. Gon\c calves
    \thanks{IME, Universidade Federal de Goi\'as, Goi\^ania, GO 74001-970, Brazil. (E-mails: {\tt
       maxlng@ufg.br} and {\tt fabriciaro@gmail.com}). The work of these authors was
    supported in part by CAPES, FAPEG/CNPq/PRONEM-201710267000532, and CNPq Grants 406975/2016-7 and  302666/2017-6.}
  \and F.R. Oliveira  \footnotemark[1]
}
 \maketitle
\begin{abstract}
In this paper, we propose a globally convergent method for solving constrained nonlinear systems.
The method combines an efficient Newton conditional gradient method with a derivative-free and nonmonotone linesearch strategy.
The global convergence analysis of the proposed method is established under suitable conditions, and some  preliminary numerical experiments are given to illustrate its performance.
\end{abstract}

\noindent {{\bf Keywords:} constrained nonlinear systems; inexact quasi-Newton method; conditional gradient method;
 Newton conditional gradient method; nonmonotone  and derivative-free linesearch; global
convergence.}

\maketitle
\section{Introduction}\label{sec:int}
Let $F:\Omega \to \mathbb{R}^n $ be a continuously differentiable nonlinear function and $\Omega \subset \mathbb{R}^n$ be an open set.
Consider the problem of finding a vector $x\in \Omega$ such that
\begin{equation}\label{eq:p}
F(x) = 0.
\end{equation}
Among various methods for solving  unconstrained nonlinear system \eqref{eq:p}, the Newton method is regarded as one of the most effective. Basically, it
generates a sequence $\{x_k\}$ in such a way that
\[x_{k+1} = x_k + s_k, \quad \forall k\geq 0,\]
 where the Newton direction $s_k$ is computed by solving the linear system
\begin{equation}\label{system}
F'(x_k) s_k = - F(x_k).
\end{equation}
We refer the reader to \cite{Argyros2013,Birgin2003,MAX1,Goncalves2016}  where convergence results  of the Newton method and its variants have been discussed.

Consider now the constrained nonlinear system
\begin{equation}\label{eq:223}
F(x)=0, \quad x\in C,
\end{equation}
where $C \subset \Omega$ is a nonempty convex compact set. Various numerical methods for solving \eqref{eq:223} have been recently proposed and studied in the literature. Many of them are combinations of Newton methods with some strategies  taking into account the constraint set.
Strategies based on  projections, trust region,  active set and gradient methods have  been used; see, e.g.,
\cite{morini1, bellavia2006, echebest2012, Mangasarian1, sandra, cruz2014, MACCONI2009859, mariniquasi, marinez2, wang2016, Zhang1, Zhu2005343}.

A Newton conditional gradient (Newton-CondG) method  was proposed in \cite{CondG} (see \cite{Oliveira2017} for its inexact version) to compute approximate solutions of \eqref{eq:223}. Briefly speaking, the latter method consists of computing a Newton step  and later applying a conditional gradient (CondG) procedure in order to get the Newton iterative  back to the feasible set. In general, the CondG method and its variants require, at each iteration, to minimize a linear  function over the constraint set, which, in general, is significantly simpler than the projection step arising in many proximal-gradient methods. Moreover, depending on the application, linear optimization oracles may provide solutions
with specific characteristics leading to important properties such as sparsity and low-rank; see, e.g., \cite{Freund2014, ICML2013_jaggi13} for a discussion
on this subject.
As shown in \cite{CondG, Oliveira2017}, the Newton-CondG method as well as its inexact version performed well and compared favorably with other methods. However, no globalization strategy was considered in  \cite{CondG, Oliveira2017}  and hence only local convergence analyses of these methods were presented.


Therefore,  the aim of this article is to propose and analyze a version global of the method in \cite{Oliveira2017}.
It is worth pointing out that, in many cases, the strategy of globalization may become the methods more robustness.
Usually,  the global convergence of the methods for solving \eqref{eq:p} is obtained by ensuring   the decreasing of the merit function
\begin{equation}\label{eq:6783}
f(x) = \frac{1}{2} \|F(x)\|^2.
\end{equation}
See, for example, \cite{william2003, cruz2014, mariniquasi, marinez2, morini2016}.
However, for the inexact quasi-Newton method, the direction $s_k$, which is an approximate solution of  \eqref{system} with $F'(x_k)$ replaced by  an approximation of it,  may not be a descent direction of \eqref{eq:6783}. Hence, in this case, only nonmonotone globalization strategy can be considered.
Almost all of these strategies are based on approximate norm descent condition  proposed in \cite{Fukushima2000}.
This condition can be described as follows: a sequence of feasible iterates $\{x_k\}$ is generated in such a way that the following nonmonotone condition is satisfied
\begin{equation}\label{desc}
\|F(x_{k+1})\| \leq (1+\eta_k) \|F(x_k)\|, \quad \forall k\geq 0,
\end{equation}
where $\{\eta_k\}$ is a positive sequence such that
\begin{equation}\label{condition}
\sum_{k = 0}^{\infty} \eta_k \leq \eta < \infty.
\end{equation}
Based on this condition, Morini proposed in \cite{morini2016} (see also \cite{mariniquasi}) a more general criterion, which replaced \eqref{desc} by the following inequalities:
\begin{equation}\label{desc1}
\|F(x_k +\pi(s_k, \lambda_k) )\| \leq (1 - \alpha (1 + \lambda_k)) \|F(x_k)\|,
\end{equation}
or
\begin{equation}\label{desc2}
\|F(x_k + \pi(s_k, \lambda_k) )\| \leq (1 + \eta_k - \alpha \lambda_k) \|F(x_k)\|,
\end{equation}
with $\eta_k$ as in \eqref{condition}, $\lambda_k \in (0,1]$, $\alpha \in (0,1),$ and $\pi(s_k, \lambda_k)$ is a suitable direction.
We mention that the global method to be proposed here is based on  the  latter globalization criterion.
 In order to illustrate the robustness and efficiency of the new method, we report some preliminary numerical experiments on a set of box-constrained nonlinear systems and compare its performance with the local FD-INL-CondG method in \cite{Oliveira2017} and the constrained dogleg method~\cite{Bellavia2012}.

The paper is organized as follows. Section \ref{sec:condGmet}  presents the global inexact quasi-Newton conditional gradient  method as well as its analysis of global convergence. Some preliminary numerical experiments for the proposed method are reported  in Section \ref{NunEx}.
\\[2mm]
\noindent
{\bf Notation:} Throughout this paper, the Jacobian matrix of $F$ at $x\in \Omega$ is denoted by $F'(x)$. 
The inner product and its associated Euclidean norm in $\mathbb{R}^n$ be denoted by $\langle\cdot,\cdot\rangle$ and $\| \cdot \|$, respectively. The $i$-th component of a vector $x$ is indicated by $(x)_i$.

\section{The algorithm and its global convergence}\label{sec:condGmet}
Our goal in this section is to present  as well as analyze  a new iterative method, namely  the  global inexact quasi-Newton conditional gradient (GIQN-CondG) method, for solving \eqref{eq:223}.


\subsection{GIQN-CondG method}\label{description}

This  subsection describes the GIQN-CondG method, which  is obtained basically by combining the  inexact Newton-like conditional gradient method proposed in \cite{Oliveira2017} with  a strategy of globalization  similar to the one in \cite{morini2016}. As already mentioned, in many cases, the strategy of globalization may become the methods more robustness.

The GIQN-CondG method is formally described as follows.
\noindent
\\
\hrule
\noindent
\\
{\bf  GIQN-CondG method}\\
\hrule
\begin{description}
\item[(S.0)] (Initialization) Let $x_0\in C$, $\alpha$, $\sigma\in (0,1)$, $\eta_k$ satisfying  \eqref{condition} and $\{\theta_j\}\subset[0,\infty)$ be given, and set $k=0$.
\item[(S.1)](Termination criterion) If $F(x_k) = 0,$ then \textbf{stop}.
\item[(S.2)](Computation of the approximate quasi-Newton  direction) Choose an invertible approximation $M_k$ of $F'(x_k)$. For the residual $r_k \in \mathbb{R}^n$ compute a duple $(s_k,y_k)  \in \mathbb{R}^n \times \mathbb{R}^n$ such that
\begin{equation}\label{aa0}
M_k s_k=-F(x_k)+r_k, \quad y_{k}=x_k+s_k.
\end{equation}
\item[(S.3)] (CondG procedure) If $y_k\in C$, set $\tilde{s}_k=s_k$; otherwise, let
\begin{equation}\label{tildes}
\tilde{s}_k = \mbox{CondG} (y_k,x_k,\theta_k \|s_k\|^2) - x_k.
\end{equation}
\item[(S.4)](Backtracking process) Set $s_+ = \tilde s_k.$ If $\|\tilde{s}_k\|\neq 0$ set $s_- =  - \tilde{s}_k$ else $s_- =  - s_k$.
\begin{itemize}
\item[(S.4.1)] Set $\lambda = 1$.
\item[(S.4.2)] Repeat
\begin{itemize}
\item[(S.4.2.1)] If $\pi(s_k, \lambda) := \lambda s_+$ satisfies  \eqref{desc1}, go to {\bf (S.5)}.\\
Else if $\pi(s_k, \lambda) := \lambda s_-$ satisfies $x_k + \pi(s_k,\lambda) \in C$ and \eqref{desc1}, go to {\bf (S.5)}.
\item[(S.4.2.2)] If $\|s_+\|\neq 0$  and $\pi(s_k, \lambda) := \lambda s_+$ satisfies  \eqref{desc2}, go to {\bf (S.5)}.\\
Else if $\pi(s_k, \lambda) := \lambda s_-$ satisfies $x_k + \pi(s_k,\lambda) \in C$ and \eqref{desc2}, go to {\bf (S.5)}.
\item[(S.4.2.3)] Set $\lambda = \sigma \lambda.$
\end{itemize}
\end{itemize}
\item[(S.5)](Computation of new iterative) Set $ \lambda_k = \lambda$, $p_k = \pi(s_k,\lambda_k)$, $x_{k+1} = x_k + p_k.$
\item[(S.6)](Update) Set $k\leftarrow k+1,$ and go to {\bf(S.1)}.
\end{description}
\noindent
{\bf end}\\
\hrule
\noindent
\\
Let us now describe the CondG procedure.
\noindent
\\
\hrule
\noindent
\\
{ {\bf CondG procedure} $z=\mbox{CondG}(y,x,\varepsilon)$} \label{CGM}\\
\hrule
\begin{description}
\item[ P0.] Set $z_1=x$ and  $t=1$.
\item[ P1.] Use the linear optimization (LO) oracle to compute an optimal solution $u_t $ of
\begin{equation}\label{eq:epslon1}
g_{t}^*=\min_{u \in  C}\{\langle  z_t-y,u-z_t \rangle\}.
\end{equation}
\item[ P2.] If $ g^*_{t}\geq -\varepsilon $,  set $z=z_t$ and {\bf stop} the procedure; otherwise, compute $\alpha_t \in \, (0,1]$ and $z_{t+1}$ as

$$
{\alpha}_t: =\min\left\{1, \frac{-g^*_{t}}{\|u_t-z_t\|^2}  \right\}, \qquad  z_{t+1}=z_t+ \alpha_t(u_t-z_t).
$$
\item[ P3.] Set $t\gets t+1$, and go to {\bf P1}.
\end{description}
{\bf end procedure}\\
\hrule
\noindent
\\

\begin{remark}
i) There are different choices for, or way to build, the matrix  $M_k$ and the residual $r_k$ in (S.2), which originate variations of the GIQN-CondG method. For example, by taking $r_k = 0$ and $M_k = F'(x_k)$ (resp. $M_k = F'(x_0)$), we obtain a globalized version of the Newton (resp. modified Newton) conditional gradient method proposed in \cite{CondG} (resp. \cite{Oliveira2017}). We refer the reader to \cite{Bogle1990, Broyden1971, Schubert1970} for some  derivative-free approaches for building $M_k$.
ii) Note that,  the CondG procedure in (S.3) is used in order  to obtain  an approximate projection of the inexact quasi-Newton iteration $y_k$ to the feasible set~$C$, and as a consequence, a possible feasible direction $\tilde s_k$. More discussions of this specialized  CondG procedure can be found in  \cite[Remark 1]{Oliveira2017}.
iii) The Backtracking process given in (S.4) is well-defined, since its repeat-loop in (S.4.2) terminates in a
finite number of steps. Indeed, as $F$ is a continuous function and $\eta_k$ is a positive scalar for every $k$, then there exists a small enough scalar $\hat{\lambda} > 0$ such that the following inequality is satisfied
\begin{equation*}
(F(x_k + \lambda s))_i^2 \leq (1 + \eta_k - \alpha \lambda)^2 (F(x_k))_i^2,
\end{equation*}
for $\lambda \in (0, \hat{\lambda})$ and $i = 1, \ldots, n$. Consequently,  condition \eqref{desc2}  trivially holds.
Moreover, since  $s_-$ may not be a   feasible search  direction,  it is necessary to check  the  feasibility of the new iterate in this case.
iv) The GIQN-CondG method is closely related to the quasi-Newton method in  \cite{mariniquasi}. However, they differ mainly in two respects. First,
our approach computes an inexact projection by the CondG procedure, whereas the method in \cite{mariniquasi}  requires,   in each iteration,   two exact projections.  As already mentioned,  in many applications,   computing the projection step may be more difficult  than solving \eqref{eq:epslon1}.
Second, in  \cite{mariniquasi},  the linear  system  \eqref{aa0} is solved exactly (i.e.,  $r_k = 0$ for every $k\geq 0$),
which may be expensive and difficult for medium and large scale problems.
\end{remark}

\subsection{Global convergence analysis}

In this subsection, we present global convergence results for the GIQN-CondG method. Specifically, we show that the sequence  $ \{\|F(x_k)\|\}$ is convergent and, under stronger assumptions, it converges to zero. Moreover, the global convergence of the  sequence $\{x_k\}$ is also established.


The following lemma guarantees that the approximate norm descent condition \eqref{desc} is satisfied for every $k$ and  establishes  some  upper bounds for $\|F(x_k)\|$.

\begin{lemma}\label{lemm4:art2}
Let $\{x_k\}$ and $\{\lambda_k\}$ be generated sequences by GIQN-CondG method.
\begin{itemize}
  \item[i)] For all $k \geq 0,$ condition \eqref{desc} holds and
  \begin{align} \label{eq1:art2}
  \|F(x_{k+1})\| &\leq e^{\eta} \|F(x_0)\|,\\
  \alpha \lambda_k \|F(x_k)\| &\leq (1 + \eta_k) \|F(x_k)\| - \|F(x_{k+1})\|. \label{eq2:art2}
  \end{align}
  \item[ii)] Let $\{k_m\}$, with $m\geq 1$ and $k_1 \geq 1$, be the indices of the iterates satisfying \eqref{desc1}, i.e.,
\begin{equation}\label{eq4:art2}
\|F(x_{k_m})\| \leq (1 - \alpha (1 + \lambda_{k_m - 1})) \|F(x_{k_m - 1})\|.
\end{equation}
Then,
 \begin{equation}\label{eq5:art2}
\|F(x_{k_m})\| \leq (1 - \alpha)^m \,e^{\eta} \,\|F(x_0)\|.
\end{equation}
\end{itemize}
\end{lemma}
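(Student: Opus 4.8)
The plan is to derive both items from a single structural fact: by the construction in (S.4)--(S.5), which terminates by Remark~iii), the new iterate $x_{k+1}=x_k+\pi(s_k,\lambda_k)$ has $\lambda_k\in(0,1]$ and satisfies \eqref{desc1} or \eqref{desc2}.

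For item~i), I would first establish the uniform estimate $\|F(x_{k+1})\|\le (1+\eta_k-\alpha\lambda_k)\|F(x_k)\|$ for every $k$. If \eqref{desc2} holds this is immediate; if \eqref{desc1} holds it follows from $1-\alpha(1+\lambda_k)=1-\alpha-\alpha\lambda_k\le 1+\eta_k-\alpha\lambda_k$, using $\alpha\in(0,1)$ and $\eta_k\ge0$. Rearranging this estimate gives \eqref{eq2:art2}, and since $\alpha\lambda_k\|F(x_k)\|\ge0$ it also gives \eqref{desc}. Iterating \eqref{desc} yields $\|F(x_{k+1})\|\le\prod_{j=0}^{k}(1+\eta_j)\,\|F(x_0)\|$, and \eqref{eq1:art2} follows from $1+t\le e^t$ and \eqref{condition}, since $\prod_{j=0}^{k}(1+\eta_j)\le\exp(\sum_{j=0}^{k}\eta_j)\le e^\eta$.

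For item~ii), the key point is a bookkeeping one: among the $k_m$ transitions carrying $x_0$ to $x_{k_m}$, exactly those indexed by $k_1-1,\dots,k_m-1$ (there are $m$ of them, since $\{k_i\}$ is increasing with $k_1\ge1$ and lists all indices where \eqref{desc1} holds) satisfy \eqref{desc1}, each contributing a factor $1-\alpha(1+\lambda)\le 1-\alpha$; each remaining transition cannot satisfy \eqref{desc1} (otherwise its terminal index would belong to $\{k_i\}$ and be $\le k_m$), hence it satisfies \eqref{desc2} and contributes a factor $1+\eta_k-\alpha\lambda_k\le 1+\eta_k$. Multiplying all $k_m$ inequalities gives $\|F(x_{k_m})\|\le(1-\alpha)^m\prod_{k\in B}(1+\eta_k)\,\|F(x_0)\|$, where $B\subseteq\{0,1,2,\dots\}$ indexes the \eqref{desc2} transitions; bounding $\prod_{k\in B}(1+\eta_k)\le\prod_{k\ge0}(1+\eta_k)\le e^\eta$ exactly as in item~i) yields \eqref{eq5:art2}. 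Equivalently one can induct on $m$, strengthening the hypothesis to $\|F(x_{k_m})\|\le(1-\alpha)^m\exp(\sum_{k\in B_m}\eta_k)\|F(x_0)\|$, so that the $e^\eta$ budget is not exhausted prematurely when passing from $k_m$ to $k_{m+1}$.

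I do not anticipate a serious obstacle. The only points requiring care are the count of exactly $m$ transitions of type \eqref{desc1} up to index $k_m$, and, in item~ii), making sure the $\eta_k$ factors accumulated between consecutive $k_m$'s do not spoil the bound -- which works because these factors come from pairwise disjoint index sets, all contained in $\{0,1,2,\dots\}$, so their product never exceeds $\prod_{k\ge0}(1+\eta_k)\le e^\eta$. Well-definedness of $\lambda_k$ and feasibility of $x_{k+1}$ are not part of this lemma and can be quoted from Remark~iii).
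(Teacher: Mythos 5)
Your proof is correct, and it fills in exactly the standard argument that the paper itself omits: the paper proves this lemma only by citation to \cite[Theorem~4.2]{morini2016} and \cite[Lemma~3.1]{mariniquasi}, and those proofs proceed as you do, combining the per-step acceptance inequalities \eqref{desc1}/\eqref{desc2}, using $1-\alpha(1+\lambda_k)\le 1+\eta_k-\alpha\lambda_k$ to get the uniform bound, and telescoping with $\prod(1+\eta_j)\le e^{\eta}$. Your bookkeeping in item~ii) (exactly $m$ steps of type \eqref{desc1} up to $k_m$, the remaining steps of type \eqref{desc2} with pairwise disjoint indices) is the right and complete way to obtain \eqref{eq5:art2}.
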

\begin{proof}
See  proofs of  \cite[Theorem~4.2]{morini2016} and \cite[Lemma~3.1]{mariniquasi} for itens (i) and (ii), respectively.
\end{proof}

 The next lemma presents  a basic property of the CondG procedure, whose proof can be found in \cite[Lemma~4]{CondG}.
\begin{lemma}  \label{pr:condi}
For any $y,\tilde y\in \mathbb{R}^n$, $x, \tilde x \in C$  and $\mu \geq 0$, we have
$$
\| \mbox{CondG}(y,x,\mu) - \mbox{CondG}(\tilde y,\tilde x,0)\|\leq  \|y-\tilde y\|+ \sqrt{2\mu}.
$$
\end{lemma}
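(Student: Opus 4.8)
The plan is to exploit the first-order optimality conditions that the CondG procedure enforces on its output. First I would record two structural facts about the procedure. Since it is initialized at a point of $C$ and every iterate $z_{t+1}=z_t+\alpha_t(u_t-z_t)$ with $\alpha_t\in(0,1]$ is a convex combination of points of $C$, every output of CondG lies in $C$. Moreover, the stopping test in \textbf{P2} says precisely that, when the procedure halts with tolerance $\varepsilon$, the returned point $z$ satisfies $\langle z-y,\,u-z\rangle\ge-\varepsilon$ for all $u\in C$. Applying this with $\varepsilon=0$, the point $\bar z:=\mbox{CondG}(\tilde y,\tilde x,0)$ obeys $\langle \bar z-\tilde y,\,u-\bar z\rangle\ge 0$ for every $u\in C$, which is exactly the variational inequality characterizing the Euclidean projection; hence $\bar z=P_C(\tilde y)$, the projection of $\tilde y$ onto $C$.

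Next, setting $z:=\mbox{CondG}(y,x,\mu)$, I would play the two optimality inequalities against each other: take $u=\bar z$ in $\langle z-y,\,u-z\rangle\ge-\mu$ and $u=z$ in $\langle \bar z-\tilde y,\,u-\bar z\rangle\ge 0$, then add the two. Rearranging the resulting inner products yields
\[
\|z-\bar z\|^2 \;\le\; \langle\, y-\tilde y,\; z-\bar z\,\rangle + \mu .
\]
Bounding the right-hand side with Young's inequality, $\langle y-\tilde y,\,z-\bar z\rangle\le \tfrac12\|y-\tilde y\|^2+\tfrac12\|z-\bar z\|^2$, gives $\|z-\bar z\|^2\le \|y-\tilde y\|^2+2\mu$, and therefore $\|z-\bar z\|\le \sqrt{\|y-\tilde y\|^2+2\mu}\le \|y-\tilde y\|+\sqrt{2\mu}$, using $\sqrt{a+b}\le\sqrt a+\sqrt b$ for $a,b\ge 0$. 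This is the claimed estimate.

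The argument is essentially routine once the two variational inequalities are on the table, and the only point that deserves genuine care is the well-definedness of $\mbox{CondG}(\,\cdot\,,\,\cdot\,,0)$: one must know that the procedure run with zero tolerance actually terminates (or, at least, that its output coincides with the exact projection). This is precisely the place where I would invoke the corresponding analysis of the CondG procedure in \cite{CondG}; everything else reduces to elementary inner-product manipulation and a single application of Young's inequality.
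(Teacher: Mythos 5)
Your proof is correct and follows essentially the same route as the argument the paper points to (Lemma~4 of \cite{CondG}): identify $\mbox{CondG}(\tilde y,\tilde x,0)$ with the exact projection $P_C(\tilde y)$ via the stopping test, combine the two variational inequalities $\langle z-y,\bar z-z\rangle\ge-\mu$ and $\langle \bar z-\tilde y,z-\bar z\rangle\ge 0$ to get $\|z-\bar z\|^2\le\langle y-\tilde y,z-\bar z\rangle+\mu$, and finish with an elementary estimate. Your use of Young's inequality in the last step (rather than Cauchy--Schwarz followed by solving the quadratic) is an equally valid, slightly cleaner finish, and your caveat about the well-definedness of the zero-tolerance call is exactly the point deferred to \cite{CondG}.
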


The following assumption is needed in order to  investigate the global convergence of the sequences $\{x_k\}$ and $ \{\|F(x_k)\|\}$.


\begin{assumption}\label{assum2}
Approximation $M_k$ of $F'(x_k)$  is invertible  for every $k\geq 0$. Moreover, assume that $M_k$ and   the residual  $r_k$ satisfy
\begin{equation}\label{con:qn}
\|{M_k}^{-1}\| \leq c_1, \qquad \|r_{k}\|\leq c_2\|F(x_{k})\|, \quad \forall k\geq 0,
\end{equation}
for some scalars $c_1>0$ and $c_2\geq 0$.
\end{assumption}

\begin{remark}\label{remark2:art2}  i) It is easy to see that the first equality in \eqref{aa0} and Assumption~\ref{assum2} imply
$$\|s_k\| \leq c_1(1+c_2) \|F(x_k)\|.$$
ii) See, for example, \cite{MR967848,Cruz2006} for more details in  how  to built matrices $M_k$ such that the  Assumption~\ref{assum2} trivially holds.
\end{remark}

Assumption~\ref{assum2} is essential  to provide estimaties for $\{\tilde s_k\}$ and $\{p_k\}$, which will be useful in the global analysis of GIQN-CondG method.


\begin{lemma} \label{lema1:art2}
Let $\{x_k\}$, $\{\|F(x_k)\|\}$ and $\{\lambda_k\}$ be generated sequences by GIQN-CondG method. Assume that Assumption~\ref{assum2} holds and $\{\theta_k\} \subset [0, \beta^2 / 2]$ where $\beta \geq 0$.  Then, for every $k\geq 0$,
\begin{itemize}
\item [i)]  $\|\tilde{s}_k\| \leq c_1 (1 + \beta)(1+c_2) \|F(x_k)\|;$
\item [ii)] $\|p_k\|  \leq c_1(1 + \beta)(1+c_2) \lambda_k \|F(x_k)\|.$
\end{itemize}
\end{lemma}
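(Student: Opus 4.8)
The plan is to split into the two cases that define $\tilde s_k$ in step (S.3) and treat item (i) first, then derive item (ii) as an immediate consequence.

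For item (i): if $y_k \in C$, then $\tilde s_k = s_k$ by (S.3), and Remark~\ref{remark2:art2}(i) already gives $\|\tilde s_k\| = \|s_k\| \le c_1(1+c_2)\|F(x_k)\| \le c_1(1+\beta)(1+c_2)\|F(x_k)\|$ since $\beta \ge 0$. If $y_k \notin C$, then $\tilde s_k = \mathrm{CondG}(y_k, x_k, \theta_k\|s_k\|^2) - x_k$. The key idea is to compare this with the trivial CondG call $\mathrm{CondG}(x_k, x_k, 0) = x_k$: applying Lemma~\ref{pr:condi} with $y = y_k$, $\tilde y = x_k$, $x = \tilde x = x_k$, and $\mu = \theta_k\|s_k\|^2$ yields
\[
\|\mathrm{CondG}(y_k,x_k,\theta_k\|s_k\|^2) - x_k\| \le \|y_k - x_k\| + \sqrt{2\theta_k}\,\|s_k\| = \|s_k\| + \sqrt{2\theta_k}\,\|s_k\|,
\]
where I used $y_k - x_k = s_k$ from \eqref{aa0}. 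Since $\theta_k \le \beta^2/2$, we have $\sqrt{2\theta_k} \le \beta$, so $\|\tilde s_k\| \le (1+\beta)\|s_k\|$. Combining with Remark~\ref{remark2:art2}(i) gives $\|\tilde s_k\| \le c_1(1+\beta)(1+c_2)\|F(x_k)\|$, which covers this case as well. (One should note that $\mathrm{CondG}(x_k,x_k,0)$ returns $x_k$ because at $t=1$ the oracle minimizes $\langle x_k - x_k, u - x_k\rangle \equiv 0$, so $g_1^* = 0 \ge -0 = -\varepsilon$ and the procedure stops at $z_1 = x_k$; I would include this one-line justification.)

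For item (ii): by (S.5), $p_k = \pi(s_k,\lambda_k)$, which equals either $\lambda_k s_+ = \lambda_k \tilde s_k$ or $\lambda_k s_- = -\lambda_k \tilde s_k$ (when $\|\tilde s_k\| \ne 0$), or $\lambda_k s_- = -\lambda_k s_k$ (when $\|\tilde s_k\| = 0$, in which case $\|p_k\| = \lambda_k\|s_k\| \le \lambda_k c_1(1+c_2)\|F(x_k)\|$ by Remark~\ref{remark2:art2}(i)). In all cases $\|p_k\| \le \lambda_k \max\{\|\tilde s_k\|, \|s_k\|\} = \lambda_k\|\tilde s_k\|$ — using that $\|s_k\| \le \|\tilde s_k\|$ is not automatic, but when $\|\tilde s_k\|=0$ the separate bound above handles it, and otherwise $\|p_k\| = \lambda_k\|\tilde s_k\|$ directly. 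Then item (i) gives $\|p_k\| \le c_1(1+\beta)(1+c_2)\lambda_k\|F(x_k)\|$.

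I do not anticipate a serious obstacle here; the only point requiring care is the bookkeeping of the several branches in (S.3)--(S.5) and verifying that the degenerate choice $s_- = -s_k$ (when $\tilde s_k = 0$) is still controlled — but that is handled directly by Remark~\ref{remark2:art2}(i). The conceptual content is entirely the application of Lemma~\ref{pr:condi} with the baseline comparison point $\mathrm{CondG}(x_k,x_k,0) = x_k$.
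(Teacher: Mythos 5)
Your proof is correct and follows essentially the same route as the paper's: compare $\mathrm{CondG}(y_k,x_k,\theta_k\|s_k\|^2)$ with $\mathrm{CondG}(x_k,x_k,0)=x_k$ via Lemma~\ref{pr:condi}, use $\sqrt{2\theta_k}\le\beta$, and invoke Remark~\ref{remark2:art2}(i). Your extra attention in item (ii) to the degenerate branch $s_-=-s_k$ (when $\tilde s_k=0$) is in fact slightly more careful than the paper, which simply writes $\|p_k\|=\lambda_k\|\tilde s_k\|$; as you observe, the claimed bound still holds there directly by Remark~\ref{remark2:art2}(i).
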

\begin{proof}
i) First of all, if $\tilde s_k = s_k$, then from Remark~\ref{remark2:art2}(i) follows that
\begin{equation*}
\|\tilde s_k\| \leq c_1(1+c_2) \|F(x_k)\|,
\end{equation*}
which, combined with the fact that $\beta \geq 0$, implies the inequality of item~(i).
On the other hand, if $\tilde s_k = 0$, the desired  inequality trivially holds. Finally, let us consider the case where
\begin{equation*}
0\neq \tilde{s}_k = \mbox{CondG}(y_k,x_k,\theta_k \|s_k\|^2) - x_k.
\end{equation*}
Using the fact that $ \mbox{CondG}(x,x,0) = x$ for all $x \in C$, Lemma~\ref{pr:condi} and the second equality in \eqref{aa0}, we obtain
\begin{align*}\label{eq14:art2}
\|\tilde{s}_k\| = \|\mbox{CondG}(y_k,x_k,\theta_k \|s_k\|^2) - \mbox{CondG}(x_k,x_k,0)\|
 \leq \|y_k - x_k\| + \sqrt{2 \theta_k} \|s_k\|
 \leq (1 + \beta) \|s_k\|,
\end{align*}
where the last inequality follows from $\sqrt{2 \theta_k} \leq \beta$.
Hence, from Remark~\ref{remark2:art2}(i), we conclude the prove of the item.
\\[2mm]
ii) It follows from GIQN-CondG method that
\begin{equation*}\label{eq15:art2}
\|p_k\|= \|\pi(s_k,\lambda_k)\| = \lambda_k\|\tilde s_k\|,
\end{equation*}
which, combined with  item (i), proves the inequality of item (ii).
\end{proof}

The next theorem discusses the global convergence of the sequences  $\{\|F(x_k)\|\}$, $\{\lambda_k \|F(x_k)\|\}$ and $\{x_k\}$ as well as the
case in which  the GIQN-CondG method fails to solve \eqref{eq:223}.


\begin{theorem}\label{theo1:art2}
Let $\{x_k\}$, $\{\|F(x_k)\|\}$ and $\{\lambda_k\}$ be generated sequences by GIQN-CondG method. Then,
\begin{itemize}
\item[i)] The sequence $\{\|F(x_k)\|\}$ is convergent;
\item[ii)] The sequence $\{\lambda_k \|F(x_k)\|\}$ is convergent and such that
\begin{equation}\label{eq8:art2}
\lim_{k\rightarrow\infty} \lambda_k \|F(x_k)\| = 0;
\end{equation}
\item[iii)] If \eqref{desc1} is satisfied for infinitely many $k$, then $\lim_{k\rightarrow\infty} \|F(x_k)\| = 0.$  Now,
if $\|F(x_k)\|\leq \|F(x_{k+1})\|$ for all $k$ sufficiently large, then  $\lim_{k\rightarrow\infty} \lambda_k = 0$ and $\lim_{k\rightarrow\infty} \|F(x_k)\| \neq 0$;
\item[iv)] If in addition Assumption \ref{assum2} holds and $\{\theta_k\} \subset [0, \beta^2 / 2]$ where $\beta \geq 0$, then the sequence $\{x_k\}$ is convergent.
\end{itemize}
\end{theorem}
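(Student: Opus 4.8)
The plan is to deduce all four items from a single summability estimate obtained by telescoping \eqref{eq2:art2}, together with the elementary ``monotone-up-to-a-product'' structure carried by \eqref{desc}; recall that if the method generates an infinite sequence then the stopping rule (S.1) is never triggered, so $\|F(x_k)\|>0$ for every $k$. For item (i), set $P_k=\prod_{j=0}^{k-1}(1+\eta_j)$ with $P_0=1$. Since $\sum_j\eta_j\le\eta<\infty$, the sequence $\{P_k\}$ is nondecreasing and converges to some $P_\infty\in[1,e^{\eta}]$. Dividing \eqref{desc} by $P_{k+1}$ gives $\|F(x_{k+1})\|/P_{k+1}\le\|F(x_k)\|/P_k$, so $\{\|F(x_k)\|/P_k\}$ is nonincreasing and bounded below by $0$, hence convergent; multiplying by the convergent factor $P_k$ shows that $\{\|F(x_k)\|\}$ converges.

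For item (ii), I would sum \eqref{eq2:art2} over $k=0,\dots,N$: the right-hand side telescopes, and using $\|F(x_k)\|\le e^{\eta}\|F(x_0)\|$ from \eqref{eq1:art2} together with \eqref{condition} it is bounded by $\|F(x_0)\|+\eta e^{\eta}\|F(x_0)\|$ independently of $N$. Hence
\begin{equation*}
\sum_{k=0}^{\infty}\lambda_k\|F(x_k)\|\le\frac{1+\eta e^{\eta}}{\alpha}\,\|F(x_0)\|<\infty ,
\end{equation*}
which gives at once that $\{\lambda_k\|F(x_k)\|\}$ converges and that its limit is $0$.

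For item (iii), if \eqref{desc1} holds for infinitely many indices, I apply Lemma \ref{lemm4:art2}(ii) to the corresponding subsequence $\{k_m\}$; by \eqref{eq5:art2}, $\|F(x_{k_m})\|\le(1-\alpha)^m e^{\eta}\|F(x_0)\|\to0$, so $\liminf_k\|F(x_k)\|=0$, and since $\{\|F(x_k)\|\}$ converges by item (i), its limit is $0$. If instead $\|F(x_k)\|\le\|F(x_{k+1})\|$ for all large $k$, then $\{\|F(x_k)\|\}$ is eventually nondecreasing, strictly positive, and convergent, so its limit $L$ satisfies $L>0$; in particular $\lim_k\|F(x_k)\|\neq0$, and dividing the conclusion of item (ii) by $\|F(x_k)\|\to L>0$ forces $\lambda_k\to0$.

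For item (iv), Lemma \ref{lema1:art2}(ii) yields $\|x_{k+1}-x_k\|=\|p_k\|\le c_1(1+\beta)(1+c_2)\,\lambda_k\|F(x_k)\|$, so summing over $k$ and invoking the estimate of item (ii) gives $\sum_k\|x_{k+1}-x_k\|<\infty$; thus $\{x_k\}$ is a Cauchy sequence in $\mathbb{R}^n$ and converges (its limit lying in $C$ by closedness). The argument has no deep obstacle: the crux is simply to notice that telescoping \eqref{eq2:art2} produces the finite bound on $\sum_k\lambda_k\|F(x_k)\|$, since this one estimate simultaneously drives items (ii) and (iv); the only points needing care are the product normalization in (i) and keeping the case distinction in (iii) clean, where one uses that an infinitely running algorithm keeps $\|F(x_k)\|>0$.
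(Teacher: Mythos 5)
Your proposal is correct. For item (iv) it is essentially identical to the paper's argument: telescoping \eqref{eq2:art2} with \eqref{condition} and \eqref{eq1:art2} to get $\sum_k\lambda_k\|F(x_k)\|\le\alpha^{-1}(1+\eta e^{\eta})\|F(x_0)\|$, then invoking Lemma~\ref{lema1:art2}(ii) to conclude $\sum_k\|p_k\|<\infty$ and hence that $\{x_k\}$ is Cauchy. For items (i)--(iii) the paper gives no argument at all, deferring to \cite[Theorem~3.2]{mariniquasi}; you supply the standard proofs that the reference uses (the product normalization $\|F(x_k)\|/P_k$ for (i), the subsequence estimate \eqref{eq5:art2} combined with (i) for the first half of (iii), and positivity of the eventual monotone limit for the second half), and they are all sound. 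The one organizational difference worth noting is that you establish the full summability $\sum_k\lambda_k\|F(x_k)\|<\infty$ already in item (ii) — which is stronger than the stated claim $\lambda_k\|F(x_k)\|\to 0$ and makes item (iv) a one-line consequence — whereas the paper postpones that computation to the proof of (iv); this buys you a cleaner logical flow at no extra cost.
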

\begin{proof} The proofs of the items (i), (ii) and (iii)  follows the same pattern as proofs of  items (i), (ii) and (iii) of  \cite[Theorem 3.2]{mariniquasi}.
 \\[2mm]
\textit{iv)}
Our goal is to prove that $\{x_k\}$ is a Cauchy sequence and hence it converges. Before, let us first prove that $\sum_{k=0}^{\infty} \lambda_k \|F(x_k)\|$ is a convergent series. It follows from  \eqref{eq2:art2} that
\begin{align*}
\sum_{k=0}^{\infty} \lambda_k \|F(x_k)\| &\leq \sum_{k=0}^{\infty} \left( \dfrac{(1 + \eta_k)}{\alpha} \|F(x_k)\| - \dfrac{1}{\alpha} \|F(x_{k+1})\|   \right)\\
& = \sum_{k=0}^{\infty}  \dfrac{1}{\alpha} (\|F(x_k)\| - \|F(x_{k+1})\|) + \sum_{k=0}^{\infty} \dfrac{\eta_k}{\alpha} \|F(x_{k+1})\|\\
&\leq \dfrac{1}{\alpha}  \|F(x_0)\|+ \sum_{k=0}^{\infty} \dfrac{\eta_k}{\alpha} \|F(x_{k+1})\|,
\end{align*}
which, combined with \eqref{condition} and \eqref{eq1:art2}, yields
\begin{align*}
\sum_{k=0}^{\infty} \lambda_k \|F(x_k)\|
\leq \frac{1}{\alpha} \|F(x_0)\| + \sum_{k=0}^{\infty} \dfrac{\eta_k}{\alpha} e^\eta \|F(x_0)\|
\leq \left(\frac{1}{\alpha} + \dfrac{\eta}{\alpha} e^\eta  \right)  \|F(x_0)\|.
\end{align*}
Since $\lambda_k \|F(x_k)\|$ is positive for every $k$, we conclude that $\sum_{k=0}^{\infty} \lambda_k \|F(x_k)\|$ is convergent.
Hence, from  Lemma~\ref{lema1:art2}, we obtain
\begin{equation*}
\sum_{k=0}^{\infty}\|p_k\| \leq c_1(1 + \beta)(1+c_2)  \sum_{k=0}^{\infty}\lambda_k\|F(x_k)\| < \infty.
\end{equation*}
On the other hand, let $m \geq l$ and consider
\begin{equation}\label{eq19:art2}
\|x_m - x_l\| = \|p_l + p_{l+1} + \ldots + p_{m-1}\| \leq \sum_{k = l}^{\infty} \|p_k\| =  \sum_{k = 0}^{\infty} \|p_k\|-\sum_{k = 0}^{l-1} \|p_k\|.
\end{equation}
Taking the limit in \eqref{eq19:art2} as $l$ goes to infinity, we have $\|x_m - x_l\|$ tends to zero. This implies that
for every $\varepsilon > 0$, there exists $l$ sufficiently large such that $\|x_m - x_l\|\leq \varepsilon$,  for all  $m \geq l$. Therefore, $\{x_k\}$ is a Cauchy sequence and the proof of the item is complete.
\end{proof}
For the last two results we will assume that the Jacobian $F'$ is Lipschitz continuous.
\begin{assumption}\label{assump3}
Assume that the Jacobian $F' $ of $F$  satisfies
$$\|F'(x) - F'(y)\| \leq L \|x - y\|, \quad \forall x, y \in C.$$
\end{assumption}
We now  prove that, under additional assumptions, the $\{\|F(x_k)\|\}$ converges to zero.

\begin{theorem}\label{theo2:art2}
Let $\{x_k\}$ be sequence generated by GIQN-CondG method. Assume that Assumptions~\ref{assum2} and \ref{assump3} hold. If for all $k$ sufficiently large the step $\tilde{s}_k$ satisfies
\begin{equation}\label{eq9:art2}
\|F'(x_k) \tilde{s}_k + F(x_k) \| \leq \delta\|F(x_k)\|, \quad 0\leq  \delta < 1 - 3\alpha,
\end{equation}
then $\lim_{k\rightarrow \infty} \|F(x_k)\| = 0.$
\end{theorem}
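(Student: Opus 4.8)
The plan is to argue by contradiction, using the convergence facts already proved in Theorem~\ref{theo1:art2}. By Theorem~\ref{theo1:art2}(i) the sequence $\{\|F(x_k)\|\}$ converges; write $F^{\ast}:=\lim_{k}\|F(x_k)\|\geq 0$ and suppose, for contradiction, that $F^{\ast}>0$. We may assume $F(x_k)\neq 0$ for every $k$ (otherwise the method terminates with $F(x_{k_0})=0$ and the claim is trivial). Let $k_0$ be an index from which \eqref{eq9:art2} holds; for $k\geq k_0$ the inequality \eqref{eq9:art2} together with $\delta<1$ forces $\tilde s_k\neq 0$, since $\tilde s_k=0$ would give $\|F(x_k)\|\leq\delta\|F(x_k)\|$ and hence $F(x_k)=0$. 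Thus for $k\geq k_0$ we have $s_+=\tilde s_k$ in (S.4) with $\|s_+\|\neq 0$, so the first branch of (S.4.2.2) is genuinely tested at every trial. Finally, by Theorem~\ref{theo1:art2}(ii), $\lambda_k\|F(x_k)\|\to 0$, and since $\|F(x_k)\|\to F^{\ast}>0$ this gives $\lambda_k\to 0$; in particular $\lambda_k<1$ for all large $k$.

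The core of the proof is then to derive a uniform positive lower bound for $\lambda_k$, contradicting $\lambda_k\to 0$. Fix $k\geq k_0$ and $\lambda\in(0,1]$. Since $x_k\in C$ and $x_k+\tilde s_k\in C$ (by construction of $\tilde s_k$ in (S.3)), the segment between them lies in the convex set $C$, so Assumption~\ref{assump3} yields the standard estimate $\|F(x_k+\lambda\tilde s_k)-F(x_k)-\lambda F'(x_k)\tilde s_k\|\leq\tfrac{L}{2}\lambda^{2}\|\tilde s_k\|^{2}$. Writing $F(x_k)+\lambda F'(x_k)\tilde s_k=(1-\lambda)F(x_k)+\lambda\bigl(F(x_k)+F'(x_k)\tilde s_k\bigr)$, using the triangle inequality and \eqref{eq9:art2}, and then invoking Lemma~\ref{lema1:art2}(i) with $c_3:=c_1(1+\beta)(1+c_2)$, I would obtain
\[
\|F(x_k+\lambda\tilde s_k)\|\leq\bigl(1-\lambda(1-\delta)\bigr)\|F(x_k)\|+\tfrac{L}{2}\,c_3^{2}\,\lambda^{2}\,\|F(x_k)\|^{2}.
\]
Dividing by $\|F(x_k)\|>0$ and discarding the nonnegative term $\eta_k$, the right-hand side is seen to be at most $(1+\eta_k-\alpha\lambda)\|F(x_k)\|$—that is, $\pi(s_k,\lambda)=\lambda s_+$ satisfies the acceptance test \eqref{desc2}—as soon as $\tfrac{L}{2}c_3^{2}\lambda\|F(x_k)\|\leq 1-\delta-\alpha$. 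This is meaningful precisely because $\delta<1-3\alpha$ guarantees $1-\delta-\alpha>2\alpha>0$. Hence, for all $k\geq k_0$, the direction $s_+$ passes \eqref{desc2} for every $\lambda\in(0,\min\{1,t_k\}]$, where
\[
t_k:=\frac{2(1-\delta-\alpha)}{L\,c_3^{2}\,\|F(x_k)\|}>0.
\]

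It remains to read the lower bound off the backtracking loop (S.4.2). Its trial values are $1,\sigma,\sigma^{2},\dots$, and it halts at the first one for which one of the four tests succeeds. If $\lambda_k=1$ there is nothing to prove; if $\lambda_k<1$, then $\lambda_k/\sigma$ is itself one of the trial values, lies in $(0,1]$, and was rejected, so in particular $(\lambda_k/\sigma)\,s_+$ fails \eqref{desc2}; by the previous paragraph this is possible only if $\lambda_k/\sigma>t_k$, i.e.\ $\lambda_k>\sigma t_k$. In either case $\lambda_k\geq\sigma\min\{1,t_k\}$ for all $k\geq k_0$. Since $\|F(x_k)\|\to F^{\ast}>0$ we have $t_k\to 2(1-\delta-\alpha)/(L c_3^{2}F^{\ast})>0$, whence $\liminf_{k}\lambda_k\geq\sigma\min\bigl\{1,\,2(1-\delta-\alpha)/(Lc_3^{2}F^{\ast})\bigr\}>0$, contradicting $\lambda_k\to 0$. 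Therefore $F^{\ast}=0$, i.e.\ $\lim_{k\to\infty}\|F(x_k)\|=0$.

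The main obstacle, as usual in nonmonotone line-search analyses, is the bookkeeping in the last paragraph: one must check that a rejected trial $\lambda_k/\sigma$ indeed satisfies $\lambda_k/\sigma\leq 1$ (which holds because the trials form the geometric grid $\{\sigma^{j}\}$), so that the estimate of the second paragraph applies to it, and one must note that $\lambda s_+$ is automatically feasible by convexity of $C$, so that the $s_+$-branch of (S.4.2.2) is always available and no feasibility screening blocks it. Everything else is routine: the Lipschitz estimate of the second paragraph, the a priori bound $\|\tilde s_k\|\leq c_3\|F(x_k)\|$ from Lemma~\ref{lema1:art2}(i) and Assumption~\ref{assum2}, and the convergence statements borrowed from Theorem~\ref{theo1:art2}; the hypothesis $\delta<1-3\alpha$ enters only to keep $1-\delta-\alpha$, and hence $t_k$, bounded away from zero.
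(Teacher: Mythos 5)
Your proof is correct, and while it rests on the same central computation as the paper's, the way it closes the argument is genuinely different. The shared core is the estimate
\(\|F(x_k+\lambda\tilde s_k)\|\le\bigl(1-\lambda(1-\delta)\bigr)\|F(x_k)\|+\tfrac{L}{2}c_3^2\lambda^2\|F(x_k)\|^2\)
with \(c_3=c_1(1+\beta)(1+c_2)\), obtained exactly as you do: split \(F(x_k)+\lambda F'(x_k)\tilde s_k=(1-\lambda)F(x_k)+\lambda(F(x_k)+F'(x_k)\tilde s_k)\), then use \eqref{eq9:art2}, Lemma~\ref{lema1:art2}(i) and Assumption~\ref{assump3}. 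The paper applies this at the \emph{accepted} step $\lambda_k$, invokes \eqref{eq8:art2} to make the quadratic term smaller than $\alpha$, deduces that the stronger test \eqref{desc1} holds for all large $k$, and finishes via Theorem~\ref{theo1:art2}(iii); the full strength of $\delta<1-3\alpha$ is needed there to get $2\alpha/(1-\alpha-\delta)<1$. You instead apply the estimate at \emph{trial} values $\lambda$, show the weaker test \eqref{desc2} passes whenever $\tfrac{L}{2}c_3^2\lambda\|F(x_k)\|\le 1-\delta-\alpha$, extract the standard backtracking bound $\lambda_k\ge\sigma\min\{1,t_k\}$, and contradict $\lambda_k\to 0$. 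Two things your route buys: it makes explicit the step-size lower bound that the paper's final step (``condition \eqref{desc1} holds for every $k\ge\bar k$'') implicitly relies on when it assumes the accepted $\lambda_k$ satisfies $\lambda_k(1-\alpha-\delta)\ge 2\alpha$; and it only uses $1-\delta-\alpha>0$, i.e.\ $\delta<1-\alpha$, so it proves a marginally stronger statement. One simplification you could make: your bound gives $\lambda_k\|F(x_k)\|\ge\sigma\min\bigl\{\|F(x_k)\|,\,2(1-\delta-\alpha)/(Lc_3^2)\bigr\}$ unconditionally, which together with \eqref{eq8:art2} yields $\|F(x_k)\|\to 0$ directly, without the contradiction scaffolding. (Both you and the paper tacitly assume the hypothesis $\{\theta_k\}\subset[0,\beta^2/2]$ of Lemma~\ref{lema1:art2}, which is not restated in the theorem; this is an imprecision inherited from the source, not a flaw of your argument.)
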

\begin{proof}
It follows from Lemma~\ref{lema1:art2} that
\begin{equation}\label{eq16:art2}
\|\tilde{s}_k\| \leq c_1 (1 + \beta)(1+c_2) \|F(x_k)\|.
\end{equation}
Let us now prove that \eqref{desc1} holds for infinitely many $k$. Since $\|\tilde{s}_k\| \neq 0$ (see \eqref{eq9:art2}), we have
\begin{align*}
F(x_k + \lambda_k \tilde{s}_k) &= F(x_k) + \int_{0}^1 F'(x_k + t \lambda_k \tilde{s}_k) \lambda_k \tilde{s}_k ~dt\\
& = (1 - \lambda_k) F(x_k) + \lambda_k (F'(x_k)\tilde{s}_k + F(x_k)) + \int_{0}^1 (F'(x_k + t \lambda_k \tilde{s}_k)- F'(x_k) )\lambda_k \tilde{s}_k ~dt.
\end{align*}
Using \eqref{eq9:art2}, \eqref{eq16:art2} and Assumption~\ref{assump3}, we obtain
\begin{align*}
\|F(x_k + \lambda_k \tilde{s}_k)\|
& \leq (1 - \lambda_k) \|F(x_k)\| + \lambda_k \delta \|F(x_k)\| + \frac{L}{2} \lambda_k ^2 \|\tilde{s}_k\|^2\\
& \leq (1 - \lambda_k + \lambda_k \delta)\|F(x_k)\| + \frac{L}{2} [c_1 (1 + \beta)(1+ c_2)]^2 \lambda_k^2 \|F(x_k)\|^2,
\end{align*}
which, combined with the fact that  $\lambda_k \in (0,1]$, yields
\begin{align*}
\|F(x_k + \lambda_k \tilde{s}_k)\|
& \leq (1 - \lambda_k + \lambda_k \delta)\|F(x_k)\| + \frac{L}{2} [c_1 (1 + \beta)(1+ c_2)]^2 \lambda_k \|F(x_k)\|^2\\
& = \left(1 - \lambda_k + \lambda_k \delta + \frac{L}{2} [c_1 (1 + \beta)(1+ c_2)]^2 \lambda_k \|F(x_k)\|\right)\|F(x_k)\|.
\end{align*}
As consequence of \eqref{eq8:art2}, we conclude that there exists a $\bar k$ such that  $(L/2) [c_1 (1 + \beta)(1+ c_2)]^2 \lambda_k \|F(x_k)\| < \alpha$ for $k\geq \bar k$. Hence, condition \eqref{desc1}  holds for $k\geq \bar k$  if
$$1 - \lambda_k + \lambda_k \delta + \alpha  \leq 1 - \alpha(1+\lambda_k).$$
or, equivalently,
$$ \lambda_k(1-\alpha- \delta)  \geq  2\alpha.$$
Therefore, since \eqref{eq9:art2} implies  $0<2\alpha/(1-\alpha- \delta)< 1$, we conclude, from steps (S.4.1) and (S.4.2) of the  GIQN-CondG method, that
condition \eqref{desc1} holds for every $k\geq \bar k$, and hence the statement of the lemma trivially follows from Theorem~\ref{theo1:art2}(iii).
\end{proof}
Note that, the first equation in \eqref{aa0} and second inequality in \eqref{con:qn} imply that $\|M_k s_k+F(x_k)\|\leq c_2 \|F(x_k)\|$, for every $k\geq 0$. Hence,  condition \eqref{eq9:art2} trivially holds if   $\tilde s_k=s_k$ and $M_k=F'(x_k)$  for all $k$ sufficiently large,  and $c_2 < 1 - 3\alpha$. In the next corollary, we give conditions in which \eqref{eq9:art2} also holds when  $M_k$ is only an approximate of $F'(x_k)$.

\begin{corollary} Let $\{x_k\}$ be sequence generated by GIQN-CondG method. Assume that Assumptions~\ref{assum2} and \ref{assump3} hold. If for all $k$ sufficiently large the steps $s_k$ and $\tilde{s}_k$ satisfy $ \tilde s_k = s_k$ and
\begin{equation}\label{eq10:art2}
\|F'(x_k) M_k^{-1}\| \leq \rho,  \qquad \|I - F'(x_k) M_k^{-1}\| \leq \upsilon , \qquad \upsilon + \rho c_2 < 1 - 3\alpha,
\end{equation}
where $\rho>0$, $\upsilon \geq 0$ and $c_2$ is given in Assumption~\ref{assum2}, then $\lim_{k\rightarrow \infty} \|F(x_k)\| = 0.$
\end{corollary}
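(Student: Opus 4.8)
The plan is to reduce the corollary to Theorem~\ref{theo2:art2} by showing that hypothesis \eqref{eq9:art2} is satisfied with the constant $\delta := \upsilon + \rho c_2$, which by the third inequality in \eqref{eq10:art2} obeys $0 \le \delta < 1-3\alpha$.

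First I would use the standing assumption $\tilde s_k = s_k$ together with the first equation in \eqref{aa0} and the invertibility of $M_k$ (Assumption~\ref{assum2}) to write
\[
\tilde s_k = s_k = M_k^{-1}\bigl(-F(x_k)+r_k\bigr) = -M_k^{-1}F(x_k) + M_k^{-1}r_k .
\]
Multiplying by $F'(x_k)$ and adding $F(x_k)$ produces the decomposition
\[
F'(x_k)\tilde s_k + F(x_k) = \bigl(I - F'(x_k)M_k^{-1}\bigr)F(x_k) + F'(x_k)M_k^{-1}r_k ,
\]
and hence, by the triangle inequality,
\[
\|F'(x_k)\tilde s_k + F(x_k)\| \le \|I - F'(x_k)M_k^{-1}\|\,\|F(x_k)\| + \|F'(x_k)M_k^{-1}\|\,\|r_k\| .
\]

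Next I would invoke the two operator bounds in \eqref{eq10:art2}, namely $\|F'(x_k)M_k^{-1}\|\le\rho$ and $\|I - F'(x_k)M_k^{-1}\|\le\upsilon$, together with the residual bound $\|r_k\|\le c_2\|F(x_k)\|$ from \eqref{con:qn}, to conclude that for all $k$ sufficiently large
\[
\|F'(x_k)\tilde s_k + F(x_k)\| \le \upsilon\|F(x_k)\| + \rho c_2\|F(x_k)\| = (\upsilon + \rho c_2)\|F(x_k)\| .
\]
This is precisely condition \eqref{eq9:art2} with $\delta = \upsilon + \rho c_2 < 1-3\alpha$. (Note that $\delta<1$ and $F(x_k)\neq 0$ force $\tilde s_k\neq 0$, which is what the proof of Theorem~\ref{theo2:art2} uses.) Applying Theorem~\ref{theo2:art2} then gives $\lim_{k\to\infty}\|F(x_k)\| = 0$, completing the argument.

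Every step here is an elementary manipulation, so there is no genuine obstacle; the only point needing a little care is ensuring that the identity $\tilde s_k = s_k$ and the bounds in \eqref{eq10:art2} are imposed on the same tail of indices, so that \eqref{eq9:art2} holds \emph{for all $k$ sufficiently large}, exactly as required by Theorem~\ref{theo2:art2}.
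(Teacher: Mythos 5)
Your proposal is correct and follows essentially the same route as the paper: both decompose $F'(x_k)\tilde s_k + F(x_k) = (I - F'(x_k)M_k^{-1})F(x_k) + F'(x_k)M_k^{-1}r_k$ via the first equation in \eqref{aa0}, bound the two terms using \eqref{eq10:art2} and the residual bound in \eqref{con:qn}, and then apply Theorem~\ref{theo2:art2} with $\delta = \upsilon + \rho c_2$. Your added remarks about $\tilde s_k \neq 0$ and the alignment of the tails of indices are sensible refinements of the same argument.
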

\begin{proof}
By the first equality in \eqref{aa0},  $ \tilde s_k= s_k$, \eqref{eq10:art2} and Assumption~\ref{assum2} follow that
\begin{align*}
\|F'(x_k)\tilde s_k + F(x_k)\| &= \|- F'(x_k)M_k^{-1}(F(x_k) - r_k)+ F(x_k)\| \\
&\leq \|(I - F'(x_k) M_k^{-1}) F(x_k)\| + \|F'(x_k)M_k^{-1} r_k\| \\
&\leq(\upsilon + \rho c_2) \|F(x_k)\|.
\end{align*}
Hence, the statement of the corollary now follows from  Theorem~\ref{theo2:art2} with $\delta=\upsilon + \rho c_2$.
\end{proof}

\section{Numerical experiments}\label{NunEx}

This section reports results of some preliminary  numerical experiments obtained by applying the GIQN-CondG method to solve $17$ test problems of the form \eqref{eq:p} with $C =\{x \in \mathbb{R}^n: l \leq x \leq u\}, $ where $l, u \in\mathbb{R}^{n}$, see Table~\ref{tab1}.
We tested the following variants of the GIQN-CondG method which differ in the way that the approximation matrices $M_k$'s  are built.
In the FD-GIQN-CondG method, the  matrices $M_k$'s were approximated by finite differences, whereas in the BSU-GIQN-CondG and BPU-GIQN-CondG methods, we used the Broyden-Schubert Update \cite{Broyden1971, Schubert1970}  and  the Bogle-Perkins Update  \cite{Bogle1990}, respectively.
For the latter two methods, we also used the strategy of periodically (i.e., $k = 0$ and mod$(k-1, 5) = 0$) approximating  by finite differences  the matrices $M_k$'s.
We compare the performance of above variants with the local FD-INL-CondG method \cite{Oliveira2017} and the constrained Dogleg solver (CoDoSol),
which is a MATLAB package based on the constrained Dogleg method  \cite{Bellavia2012}, and available on  the web site  {\it http://codosol.de.unifi.it}. In the latter two methods, the Jacobian matrices were approximated by finite differences. The parameters of the CoDoSol were selected as recommended by the authors, see \cite[Subsection~4.1]{Bellavia2012}. All  numerical results were obtain using MATLAB R2016a on a  2.5GHz Intel(R) i5 with 6GB of RAM and Windows 7 ultimate operation system.

For all methods, the starting points were defined as  $x_0(\gamma) = l + 0.2 \gamma (u - l)$, where $\gamma\geq 0$. 
Moreover, we used the same overall termination condition  $\|F(x_k)\|_{\infty}\leq10^{-6}$,  and  a failure was  declared  if either no progress was detected or the total number of iterations exceeded $300$. In the variants of the GIQN-CondG method,  the initialization data  were  $\alpha = 10^{-4}$, $\sigma = 0.5$, $\eta_k = 0.99^k (100 + \|F(x_0)\|^2)$ and $\theta_k = 10^{-5},$ for every $ k \geq 0$, and
the linear  systems in \eqref{aa0} were solved by direct methods, i.e., $r_k = 0$  for all $k\geq 0 $. The CondG procedure
stopped when either the stopping criterion given in P2 is satisfied  or the maximum of $300$ iterations are performed.
Note that, in this application, subproblem  \eqref{eq:epslon1} has a closed-form solution, i.e.,  if $(z_t)_i-(y)_i\geq0$, then $(u_t)_i=(l)_i$; otherwise  $(u_t)_i=(u)_i$.
The parameters of the FD-INL-CondG method  were chosen as the corresponding one of its global version (i.e., GIQN-CondG method).

Tables~\ref{tab2} and \ref{tab3} display all numerical results obtained. The methods were compared on the total number of iterates (It), number of F-evaluation (Fe) and CPU time in seconds (Time). The symbol ``$*$" indicates a failure, whereas  $\|F\|_{\infty}$ and $\zeta(q)$ are the  infinity norm of $F$ at the final iterate $x_k$ and  $\zeta\cdot 10^q$, respectively. In Table~\ref{tab2}, the number of F-evaluations of the  FD-INL-CondG method was omitted in all cases, because  it is always  equal to the number of iterations plus one.

From Table~\ref{tab2}, in terms of  amount of problems solved, we can see that the FD-GIQN-CondG method was  more robust than the FD-INL-CondG method and CoDoSol. This because the FD-GIQN-CondG method solved $47$ problems of a total of $51$, whereas the FD-INL-CondG method and CoDoSol sucessfully ended in $42$ problems. Regarding to the number of iterations, we observe that the FD-GIQN-CondG and FD-INL-CondG methods had similar performance and, in general, they required less iterations than CoDoSol. Similar efficiency can also be observed for the number of F-evaluations of the FD-GIQN-CondG method and CoDoSol. The CPU times of the three methods were practically the same.

Comparing the methods in which $F'$ is not evaluated at each iteration, we can observe, from Table~\ref{tab3}, that the BSU-GIQN-CondG and BPU-GIQN-CondG methods  were similar in terms of robustness and efficiency.
Note also that the slower convergence rates of the  BSU-GIQN-CondG and BPU-GIQN-CondG methods are compensated by their smaller CPU times per iteration.  Such a behavior is due to the fact that quasi-Newton approximations of $M_k's$ are computationally cheaper.

As a summary of the previous discussion, we can say that the GIQN-CondG method seems to be a robust and efficient tool for solving box-constrained systems of nonlinear equations.

\begin{table}[h]
\centering
\caption{Test problems }\label{tab1}
\vspace{0.5cm}
{\small
\begin{tabular}{|c|c|c|c|}
\hline
Problem & Name and source  & n & Box \\
\hline
Pb 1  & Effati-Grosan problem 2 \cite{athanassios2010}   & 2 & $[-10,10]$\\
Pb 2  & Reactor $R = 0.935$ \cite{athanassios2010} & 2  & $[0,5]$\\
Pb 3  & Merlet  \cite{athanassios2010}& 2 & $[0,2 \pi]$\\
Pb 4  & Brown's almost linear system \cite[14.1.5]{Jones:2000}& 5 & $[-2,2]$\\
Pb 5  & Countercurrent reactors 2 \cite[Problem 4.2]{Vlcek} & 8 & $[-100, 10]$\\
Pb 6  & Chemical reaction problem  \cite[Problem 5]{sandra} & 67  & $[-20,20]$\\
Pb 7  & Yamamutra  \cite{athanassios2010} & 100  & $[-100,100]$\\
Pb 8  & Extended Freudenstein-Roth \cite[Problem 4.11]{Vlcek} & 100 & $[-100, 100]$\\
Pb 9 & Tridiagonal system \cite[Problem 4.7]{Vlcek}  & 100  & $[-5,5]$\\
Pb 10 & Extended Wood \cite[Problem 4.17]{Vlcek}  & 100  & $[-5,5]$\\
Pb 11 & Singular Broyden\cite[Problem 4.6]{Vlcek} & 100 & $[-100,1]$\\
Pb 12 & Extended Powell singular \cite[Problem 4.12]{Vlcek} & 100 & $[-5,5]$\\
Pb 13 & Broyden tridiagonal \cite[Problem 30]{more1} & 500  & $[-100, 0]$\\
Pb 14 & Structured Jacobian \cite[Problem 3.19]{Vlcek} & 500  & $[-100, 0]$\\
Pb 15 & Brent \cite[Problem 4.20]{Vlcek} & 500  & $[-100,100]$\\
Pb 16 & Bratu \cite[Problem 4.24]{Vlcek} & 1024 & $[-100,1.5]$\\
Pb 17 & Trigonometric function \cite[Problem 8]{william2003}  & 2000  & [-50,150]\\
\hline
\end{tabular}
}
\end{table}

\begin{table}[h]
\centering
\caption{Performance of the FD-GIQN-CondG, FD-INL-CondG methods and CoDoSol}\label{tab2}
\vspace{0.5cm}
{\scriptsize
\begin{tabular}{|cc|ccc|cc|ccc|ccc|}
\hline
  &  &   \multicolumn{3}{|c|}{FD-GIQN-CondG}&  \multicolumn{2}{|c|}{FD-INL-CondG} & \multicolumn{3}{|c|}{CoDoSol}\\
\hline
 Problem& $\gamma$ & It & Fe & Time/$\|F\|_{\infty}$ &  It  & Time/$\|F\|_{\infty}$& It & Fe & Time/$\|F\|_{\infty}$\\
\hline
 Pb  1 &1 & *    &     &                       &*     &                           & * &   &\\
       &2 & 29   & 31  & $7.55(-2)$/$6.74(-7)$ & *    &                           & 6 & 8 & $3.36(-2)$/$1.85(-8)$\\
       &3 & 28   & 30  & $3.05(-2)$/$6.22(-7)$ &19    & $7.84(-2)$/$2.01(-7)$     & 7 & 9 & $1.67(-2)$/$1.67(-10)$  \\
\hline
 Pb  2 & 1 & 13  & 14  &$3.71(-1)$/$4.80(-12)$ & 13  &  $1.15(0)$/$4.80(-12)$     & 13 & 14 & $2.66(-1)$/$4.80(-12)$\\
       & 2 & 23  & 24  &$2.18(-2)$/$8.06(-8)$  & 23  &  $2.10(-2)$/$8.06(-8)$     & 23 & 24 & $4.41(-2)$/$8.06(-8)$\\
       & 3 & 33  & 34  &$1.52(-2)$/$7.31(-8)$  & 33  &  $2.26(-2)$/$7.31(-8)$     & 33 & 34 & $1.03(-2)$/$7.31(-8)$\\
\hline
  Pb  3 &1 &3  & 4 &$5.13(-1)$/$1.41(-11)$  & 3  & $4.06(-1)$/$1.41(-11)$    & 3 & 4 & $1.79(-1)$/$1.99(-11)$\\
        &2 &4  & 5 &$1.15(-1)$/ $0$         & 3  & $1.04(-1)$/$7.35(-16)$    & 5 & 8 & $4.71(-2)$/$6.58(-12)$\\
        &3 &4  & 5 &$8.97(-3)$/ $7.35(-16)$ & 3  & $1.11(-2)$/$0.00(0)$      & 5 & 8 & $1.15(-2)$/$3.71(-12)$\\
\hline
  Pb  4 &2.5 &*  &   &                         & *   &                           & 6 & 7 & $1.70(0)$/$3.58(-10)$\\
        &3.5 &8  & 9 &$2.37(-2)$/ $2.84(-11)$  & 11  & $3.50(-2)$/$4.21(-8)$     & 4 & 5 & $5.08(-2)$/$1.33(-10)$\\
        &4.5 &13 &14 &$4.19(-2)$/ $4.21(-8)$   & *   &                           & 6 & 8 & $1.39(-2)$/$7.10(-8)$\\
\hline
Pb    5 & 0 & 118 & 119 &$5.24(-1)$/ $2.02(-7)$ & *   &                 & *  &    & \\
        & 1 & *   &     &                       & *   &                 & *  &    & \\
        & 2 & *   &     &                       & *   &                 & *  &    & \\
\hline
Pb    6 & 0 & 18 & 19 &$2.84(0)$/ $3.37(-7)$ & *   &                         & *  &    & \\
        & 1 & 18 & 19 &$1.51(0)$/ $6.16(-8)$ & 25  & $1.96(0)$/$3.95(-7)$    & 17 & 18 & $1.65(0)$/$6.85(-9)$\\
        & 2 & 15 & 16 &$1.20(0)$/ $2.63(-8)$ & 14  & $1.11(0)$/$2.61(-7)$    & 16 & 19 & $1.34(0)$/$1.82(-7)$\\
  \hline
Pb    7 &1 &13  & 14  &$5.86(-1)$/ $1.20(-8)$ & 13  & $4.93(-1)$/$1.20(-8)$    & 16 & 17 & $4.26(-1)$/$3.52(-7)$\\
        &2 &10  & 11  &$1.12(-1)$/ $3.87(-7)$ & 10  & $1.49(-1)$/$3.87(-7)$    & 12 & 13 & $1.59(-1)$/$4.58(-7)$\\
        &3 &11  & 12  &$1.04(-1)$/ $1.28(-8)$ & 11  & $1.60(-1)$/$1.28(-8)$    & 13 & 14 & $1.49(-1)$/$1.09(-8)$\\
        \hline
Pb   8  &1 &34   & 35 &$4.85(-1)$/ $1.46(-13)$  & 34  & $5.17(-1)$/$1.42(-13)$    & * & &\\
        &2 &18   & 19 &$1.61(-1)$/ $5.15(-8)$   & 18  & $2.27(-1)$/$5.16(-8)$    & * & &\\
        &3 &9    & 10 &$7.25(-2)$/ $5.68(-14)$  & 9   & $1.17(-1)$/$5.68(-14)$   & 10 & 11 & $9.63(-2)$/$1.09(-10)$\\
        \hline
Pb   9 &1     &13  & 14  &$1.99(-1)$/$7.05(-10)$ & 13  & $2.31(-1)$/$7.05(-10)$   & * & & \\
        &2    &9   & 10  &$2.27(-1)$/$7.65(-8)$  & 9   & $1.28(-1)$/$7.65(-8)$   & 10 & 12 & $1.01(-1)$/$1.21(-7)$\\
        &3.5  &7   &  8  &$5.32(-2)$/$4.04(-12)$ & 7   & $6.44(-2)$/$4.04(-12)$   & 7 & 8 & $7.18(-2)$/$2.74(-8)$\\
        \hline
Pb   10  &1    &15  & 16  &$2.37(-1)$/ $8.03(-13)$ & 15  & $2.57(-1)$/$9.00(-13)$    & 21 & 28 & $3.48(-1)$/$7.91(-11)$\\
        &2    &5   & 6   &$5.48(-2)$/ $7.50(-8)$  & 5   & $6.79(-2)$/$7.50(-8)$    & 10 & 14 & $1.29(-1)$/$6.05(-13)$\\
        &3.5  &8   & 9   &$8.12(-2)$/ $2.92(-9)$  & 8   & $1.02(-1)$/$2.92(-9)$    &10 & 12 & $1.15(-1)$/$8.05(-11)$\\
        \hline
Pb   11  &1  & 27  & 28  & $3.97(-1)$/$4.97(-7)$ & 27  & $3.00(-1)$/$4.97(-7)$   & 32 & 33 & $2.82(0)$/$8.16(-7)$\\
        &2  & 26  & 27  & $2.67(-1)$/$4.87(-7)$ & 26  & $3.09(-1)$/$4.87(-7)$   & 31 & 32 & $5.71(-1)$/$4.62(-7)$\\
        &3  & 25  & 26  & $2.22(-1)$/$2.66(-7)$ & 25  & $2.73(-1)$/$2.66(-7)$   & 29 & 30 & $4.16(-1)$/$6.92(-7)$\\
        \hline
Pb   12 &1  &  14  & 15 & $3.31(-1)$/$9.06(-7)$ & 14  & $2.68(-1)$/$9.06(-7)$   & 15 & 16 & $4.10(-1)$/$9.85(-7)$\\
        &2 &  13  & 14 & $1.24(-1)$/$5.35(-7)$ & 13  & $1.65(-1)$/$5.35(-7)$   & 14 & 15 & $1.83(-1)$/$3.26(-7)$\\
        &3 &  13  & 14 & $1.16(-1)$/$5.35(-7)$ & 13  & $1.65(-1)$/$5.35(-7)$   & 14 & 15 & $1.59(-1)$/$3.26(-7)$\\
        \hline
Pb   13 &1  & 10   & 11   & $1.64(0)$/$7.87(-8)$  & 10  & $1.56(0)$/$7.87(-8)$    & 16 & 17 & $3.00(0)$/$7.64(-8)$\\
        &2  & 10   & 11   & $1.45(0)$/$2.05(-10)$ & 10  & $1.52(0)$/$2.05(-10)$  & 16 & 17 & $2.65(0)$/$2.95(-13)$\\
        &3  & 9    & 10   & $1.27(0)$/$7.96(-8)$  & 9   & $1.37(0)$/$7.96(-8)$    & 15 & 16 & $2.44(0)$/$3.80(-12)$\\
        \hline
Pb   14 &1  & 10  & 11  & $1.80(0)$/$5.57(-8)$  & 10  & $1.92(0)$/$5.57(-8)$     & 17 & 18 & $3.42(0)$/$2.74(-9)$\\
        &2  & 10  & 11  & $1.56(0)$/$9.72(-11)$ & 10  & $1.58(0)$/$9.72(-11)$   & 16 & 17 & $3.03(0)$/$3.12(-7)$\\
        &3  & 9   & 10  & $1.41(0)$/$6.35(-8)$  & 9   & $1.48(0)$/$6.35(-8)$     & 16 & 17 & $2.93(0)$/$9.17(-12)$\\
        \hline
Pb   15 &1 & 15    & 16   & $2.06(0)$/$3.42(-8)$  & 15  & $2.33(0)$/$3.42(-8)$     & 19 & 20 & $2.87(0)$/$3.91(-7)$\\
        &2 & 13    & 14   & $1.62(0)$/$4.52(-7)$  & 13  & $1.99(0)$/$4.52(-7)$     & 16 & 17 & $2.22(0)$/$8.30(-8)$\\
        &3 & 11    & 12   & $1.36(0)$/$7.64(-11)$ & 11  & $1.65(0)$/$7.64(-11)$   & 13 & 14 & $1.82(0)$/$1.44(11)$\\
        \hline
Pb   16 &1 & 1   & 2   & $9.91(-1)$/$9.52(-7)$ & 1  & $7.30(-1)$/$9.52(-7)$   & 10 & 11 & $7.28(0)$/$2.21(-7)$\\
        &2 & 2   & 3   & $1.19(0)$/$1.78(-8)$  & 2  & $1.21(0)$/$1.78(-8)$    & 10 & 11 & $7.16(0)$/$1.88(-7)$\\
        &3 & 1   & 2   & $6.58(-1)$/$9.52(-7)$ & 1  & $6.62(-1)$/$9.52(-7)$   &  9  & 10 & $6.09(0)$/$1.97(-7)$\\
        \hline
Pb   17 &0 & 6    & 7    & $2.01(1)$/$6.40(-8)$  & *   &                        & * &     &\\
        &1 & 14   & 15   & $4.38(1)$/$3.70(-8)$  & 14  & $4.80(1)$/$3.01(-8)$   & 7 & 8   & $2.51(1)$/$5.75(-10)$\\
        &2 & 15   & 16   & $4.63(1)$/$2.11(-8)$  & 15  & $7.01(1)$/$1.35(-10)$  & 17 & 18 & $5.39(1)$/$1.49(-9)$\\
        \hline
\end{tabular}
}
\end{table}

\begin{table}[h]
\centering
\caption{Performance of the BSU-GIQN-CondG, BPU-GIQN-CondG methods}\label{tab3}
\vspace{0.5cm}
{\scriptsize
\begin{tabular}{|cc|ccc|ccc|}
\hline
  &  &   \multicolumn{3}{|c|}{BSU-GIQN-CondG}&  \multicolumn{3}{|c|}{BPU-GIQN-CondG} \\
\hline
 Problem& $\gamma$ &   It & Fe & Time/$\|F\|_{\infty}$& It & Fe & Time/$\|F\|_{\infty}$\\
\hline
 Pb  1 &1 & 12 & 13 & $2.88(-1)$/$4.62(-9)$ & 32 & 33 & $2.24(-1)$/$2.95(-10)$ \\
       &2 & 36 & 38 & $3.20(-2)$/$6.40(-7)$ & 64 & 66 & $7.22(-2)$/$9.35(-7)$  \\
       &3 & 13 & 14 & $1.99(-2)$/$1.82(-9)$ & 12 & 13 & $1.24(-2)$/$1.02(-9)$   \\
\hline
 Pb  2 & 1 & 17 & 18 & $1.63(-1)$/$3.62(-11)$ & 16 & 17 & $1.76(-1)$/$3.18(-7)$    \\
       & 2 & 31 & 32 & $1.37(-2)$/$1.97(-7)$  & 31 & 32 & $1.45(-2)$/$1.49(-9)$   \\
       & 3 & 46 & 47 & $1.33(-2)$/$3.37(-8)$  & 44 & 45 & $1.32(-2)$/$3.45(-8)$   \\
\hline
  Pb  3 &1  &3 & 4 & $8.99(-2)$/$7.63(-7)$  & 3 & 4 & $8.70(-2)$/$7.63(-7)$   \\
        &2  &6 & 7 & $5.78(-2)$/$1.33(-10)$ & 6 & 7 & $3.85(-2)$/$1.33(-10)$  \\
        &3  &6 & 7 & $6.21(-3)$/$1.33(-10)$ & 6 & 7 & $6.88(-3)$/$1.33(-10)$  \\
\hline
Pb    4 &2.5 & 10 & 11 & $7.10(-1)$/$2.11(-7)$ & 10 & 11 & $1.54(-1)$/$2.03(-7)$  \\
        &3.5 & 10 & 11 & $1.88(-2)$/$2.41(-9)$ & 10 & 11 & $2.34(-2)$/$2.51(-9)$  \\
        &4.5 & 16 & 17 & $6.49(-2)$/$7.38(-7)$ & 15 & 16 & $8.47(-2)$/$5.75(-7)$   \\
        \hline
Pb    5 & 0 & 62 & 63 & $4.98(-1)$/$1.46(-9)$       & 104 & 105 & $3.63(-1)$/$2.64(-8)$  \\
        & 1 & *  &    &                             & 38  & 39  & $7.32(-2)$/$6.06(-10)$  \\
        & 2 & *  &    &                             & *   &     &    \\
        \hline
Pb    6 & 0 & * &  &                             & * &  &   \\
        & 1 & * &  &                             & * &  &   \\
        & 2 & * &  &                             & * &  &    \\
        \hline
Pb    7 &1 & 17 & 18 & $2.57(-1)$/$3.30(-9)$ & 22 & 23 & $5.49(-1)$/$7.15(-10)$  \\
        &2 & 13 & 14 & $6.05(-2)$/$4.75(-7)$ & 27 & 28 & $1.58(-1)$/$1.43(-11)$  \\
        &3 & 14 & 15 & $5.45(-2)$/$3.18(-7)$ & 35 & 36 & $2.02(-1)$/$6.31(-7)$   \\
        \hline
Pb   8  &1   & 28 & 29 & $3.01(-1)$/$6.68(-10)$ & 27 & 28 & $2.84(-1)$/$1.06(-11)$ \\
        &2   & 18 & 19 & $7.62(-2)$/$4.50(-10)$ & 78 & 79 & $2.48(-1)$/$2.10(-9)$\\
        &3   & 12 & 13 & $4.72(-2)$/$2.90(-12)$ & 12 & 13 & $4.69(-2)$/$4.83(-12)$ \\
        \hline
Pb   9 &1      &61 & 62 & $3.38(-1)$/$2.57(-7)$  & 92 & 93 & $6.48(-1)$/$5.18(-7)$ \\
        &2     &* & &                           & 127 & 128 & $7.17(-1)$/$9.67(-7)$  \\
        &3.5   &17 & 18 & $8.00(-2)$/$1.17(-11)$ & 13 & 14 & $4.38(-2)$/$9.49(-10)$ \\
        \hline
Pb   10  &1   &  * & &  & * & & \\
        &2   & 8  & 9  & $4.76(-2)$/$2.21(-8)$ & 7  & 8  & $5.59(-2)$/$8.53(-8)$ \\
        &3.5 & 13 & 14 & $5.76(-2)$/$2.87(-8)$ & 20 & 21 & $7.47(-2)$/$2.20(-8)$ \\
        \hline
Pb   11  &1   & 36 & 37 & $3.81(-1)$/$6.40(-7)$ & 37 & 38 & $2.53(-1)$/$7.17(-7)$ \\
        &2   & 35 & 36 & $1.05(-1)$/$4.15(-7)$ & 37 & 38 & $1.22(-1)$/$3.07(-7)$ \\
        &3   & 32 & 33 & $1.06(-1)$/$9.08(-7)$ & 33 & 34 & $1.50(-1)$/$5.87(-7)$ \\
        \hline
Pb   12 &1    &  19 & 20 & $2.66(-1)$/$5.02(-7)$ & 20 & 21 & $2.27(-1)$/$8.19(-7)$\\
        &2   &  17 & 18 & $8.20(-2)$/$4.65(-7)$ & 18 & 19 & $1.06(-1)$/$7.41(-7)$ \\
        &3   &  17 & 18 & $7.28(-2)$/$4.65(-7)$ & 18 & 19 & $7.01(-2)$/$7.41(-7)$ \\
        \hline
Pb   13 &1   &  14 & 15 & $1.12(0)$/$9.38(-8)$  & 13 & 14 & $9.78(-1)$/$9.93(-8)$  \\
        &2   &  13 & 14 & $7.98(-1)$/$6.96(-7)$ & 12 & 13 & $6.93(-1)$/$7.98(-7)$  \\
        &3   &  13 & 14 & $7.25(-1)$/$3.52(-9)$ & 12 & 13 & $6.98(-1)$/$5.44(-10)$ \\
        \hline
Pb   14 &1   &  14  & 15 & $1.08(0)$/$7.29(-7)$  & 13 & 14 & $1.11(0)$/$4.11(-7)$  \\
        &2   &  14  & 15 & $8.19(-1)$/$5.11(-8)$ & 13 & 14 & $8.21(-1)$/$8.74(-8)$ \\
        &3   &  13  & 14 & $7.76(-1)$/$3.73(-7)$ & 13 & 14 & $8.64(-1)$/$2.86(-9)$ \\
        \hline
Pb   15 &1   & 22 & 23 & $1.15(0)$/$5.12(-8)$  & 20 & 21 & $9.92(-1)$/$1.08(-7)$ \\
        &2   & 19 & 20 & $8.16(-1)$/$6.49(-7)$ & 17 & 18 & $8.02(-1)$/$9.29(-7)$ \\
        &3   & 15 & 16 & $6.17(-1)$/$9.11(-8)$ & 14 & 15 & $6.10(-1)$/$9.81(-9)$ \\
        \hline
Pb   16 &1   & 1 & 2 & $8.67(-1)$/$9.52(-7)$ & 1 & 2 & $8.72(-1)$/$9.52(-7)$ \\
        &2   & 2 & 3 & $1.15(0)$/$1.78(-8)$  & 2 & 3 & $1.18(0)$/$1.78(-8)$  \\
        &3   & 1 & 2 & $6.14(-1)$/$9.52(-7)$ & 1 & 2 & $6.20(-1)$/$9.52(-7)$ \\
        \hline
Pb   17 &0   &  8  & 9  &$1.31(1)$/$4.52(-10)$  & 11 & 12 & $1.47(1)$/$1.81(-7)$ \\
        &1   &  18 & 19 &$2.27(1)$/$9.97(-7)$   & 25 & 26 & $3.05(1)$/$1.50(-7)$ \\
        &2   &   * &    &  & * &  &  \\
        \hline
\end{tabular}
}
\end{table}

\end{document}